\renewcommand\@biblabel[1]{#1.}
\newtheorem*{theorem*}{Theorem}
\newtheorem*{proposition*}{Proposition}
\newtheorem*{lemma*}{Lemma}
\newtheorem*{remark*}{Remark}
\newtheorem{question}{Question}
\newtheorem*{question*}{Question}
\newtheorem*{corollary*}{Corollary}
\newcommand{\Irr}{{\rm{Irr}}}
\renewcommand{\chi}{\chiup}
\renewcommand{\[}{\begin{equation*}}
\renewcommand{\]}{\end{equation*}}
\author[A. R. Miller]{Alexander R. Miller}
\title[Character zeros]
{The probability that a character value is zero for the symmetric group}
\date{\today}
\begin{document}
\begin{abstract}
We consider random character values $\chi(g)$ of the 
symmetric group $\mathfrak S_n$, where $\chi$ is chosen 
at random from the set of irreducible characters and $g$ is 
chosen at random from the group, and we show that
$\chi(g)=0$ with probability $\to 1$ as $n\to\infty$.
\end{abstract}
\maketitle
\thispagestyle{empty}
\section{Introduction}
Let $\chi$ be chosen (uniformly) at random from the irreducible 
characters of the symmetric group $\mathfrak S_n$, and let $g$ 
be chosen at random from the group itself.  What is the 
probability that $\chi(g)=0$?  In this paper we 
give a remarkable asymptotic answer of one.

\begin{theorem*}  
If $\chi$ is chosen at random from 
the irreducible characters of $\mathfrak S_n$, 
and $g$ at random from $\mathfrak S_n$, 
then $\chi(g)=0$ with probability $P_n$ $\to 1$ as $n\to\infty$.
\end{theorem*}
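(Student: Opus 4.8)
The plan is to exhibit a simple event that forces $\chi(g)=0$ and to show it occurs with probability tending to $1$. Write $\chi=\chi^\lambda$ for the partition $\lambda\vdash n$ indexing it, let $\mu\vdash n$ be the cycle type of $g$ (so the longest cycle of $g$ has length $\mu_1$), and write $p(n)$ for the number of partitions of $n$. By the Murnaghan--Nakayama rule, $\chi^\lambda(\mu)$ is computed by a recursion whose first step removes a border strip (rim hook) of size $\mu_1$ from the Young diagram of $\lambda$ in all possible ways, so $\chi^\lambda(\mu)=0$ as soon as $\lambda$ has no removable border strip of size $\mu_1$. Since the removable border strips of size $\ell$ in $\lambda$ correspond to the cells of $\lambda$ of hook length $\ell$, and the largest hook length of $\lambda$ equals $\lambda_1+\lambda_1'-1$, we conclude
\[
\chi^\lambda(g)=0\quad\text{whenever}\quad \mu_1>\lambda_1+\lambda_1'-1 .
\]
As $\chi$ and $g$ are chosen independently, it therefore suffices to produce a sequence $m_n=o(n)$ with $\Pr[\lambda_1+\lambda_1'-1\ge m_n]\to 0$ over uniform $\lambda\vdash n$ and $\Pr[\mu_1\le m_n]\to 0$ over uniform $g\in\mathfrak S_n$; then $P_n\ge 1-o(1)$, while $P_n\le 1$ is trivial. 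A convenient choice is $m_n=n/\log n$.

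For the permutation side I would invoke the elementary fact that the cycle of $g$ through any fixed point has length uniform on $\{1,\dots,n\}$; hence $\Pr[\mu_1\le m]\le m/n$, which is $1/\log n\to 0$ for $m=m_n$. For the partition side, the conjugation symmetry $\lambda\leftrightarrow\lambda'$ reduces matters to bounding $\Pr[\lambda_1\ge m_n/2]$. Estimating the number of partitions of $n$ with largest part exactly $\ell$ by $p(n-\ell)$ and summing gives $\Pr[\lambda_1\ge k]\le (n+1)\,p(n-k)/p(n)$, and the Hardy--Ramanujan asymptotics $p(n)\sim e^{\pi\sqrt{2n/3}}/(4n\sqrt3)$ yield $p(n-k)/p(n)\le C\exp(-\pi k/\sqrt{6n})$ for $k=o(n)$; with $k=m_n/2=n/(2\log n)$ this is $\exp(-\Theta(\sqrt n/\log n))$, which swamps the factor $n+1$, so $\Pr[\lambda_1\ge m_n/2]\to 0$. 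Combining the two estimates completes the argument.

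I do not anticipate a real obstacle: everything hinges on the single observation that the longest cycle of a random permutation, of length $\Theta(n)$, dwarfs the largest hook length of a random partition, which is $O(\sqrt n\log n)$; the remainder is a routine application of the Murnaghan--Nakayama rule together with elementary tail bounds, and there is enough slack in the choice of $m_n$ that the rates are not delicate. The one point demanding care is the passage to border strips: the vanishing has to be extracted from the \emph{first} step of the Murnaghan--Nakayama recursion, applied to the largest part $\mu_1$, and one must use the border-strip/hook-length correspondence to turn ``$\lambda$ has no removable border strip of size $\mu_1$'' into the stated inequality on $\lambda_1+\lambda_1'-1$.
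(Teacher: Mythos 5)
Your proof is correct, but it runs along a genuinely different track from the paper's. The paper never identifies \emph{which} characters vanish: it uses the column orthogonality relation $\sum_\chi|\chi(g)|^2=z_\lambda$ together with integrality of symmetric-group characters to conclude that at most $z_\lambda$ irreducible characters can be nonzero at $g$, and then chooses a set $\Omega_n$ of only $o(p_n)$ cycle types (those with a part $\geq 2C\sqrt n\log n$, small in number by Erd\H{o}s--Lehner) that nevertheless covers $(1-o(1))\,n!$ permutations (via Goncharov's theorem on the number of cycles). You instead extract an explicit combinatorial vanishing criterion from the Murnaghan--Nakayama rule --- $\chi^\lambda(g)=0$ once the longest cycle length $\mu_1$ exceeds the maximal hook length $\lambda_1+\lambda_1'-1$ --- and then verify by independent tail bounds that this event is typical; all the individual steps (border strips of size $\ell$ correspond to hook lengths $\ell$, the maximal hook is $\lambda_1+\lambda_1'-1$, the cycle through a fixed point has uniform length so $\Pr[\mu_1\le m]\le m/n$, and the Hardy--Ramanujan estimate $p(n-k)/p(n)\le C e^{-\pi k/\sqrt{6n}}$ for $k=o(n)$) check out, and the choice $m_n=n/\log n$ leaves ample room. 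Each route buys something: the paper's argument is softer and, as its \S3 Proposition records, generalizes verbatim to any finite group (only the choice of $\Omega$ is $\mathfrak S_n$-specific), whereas yours is tied to $\mathfrak S_n$ but exhibits a concrete structural reason for the vanishing and replaces Goncharov's central limit theorem with the elementary uniform-cycle-length observation. Your partition tail bound is also quantitatively cruder than Erd\H{o}s--Lehner (threshold $n/\log n$ versus $C\sqrt n\log n$), but that slack is harmless here.
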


It will follow that the same must be true for 
the alternating group $\mathfrak A_n$.

\begin{corollary*}  
If $\chi$ is chosen at random from 
the irreducible characters of $\mathfrak A_n$, 
and $g$ at random from $\mathfrak A_n$, 
then $\chi(g)=0$ with probability $\to 1$ as $n\to\infty$.
\end{corollary*}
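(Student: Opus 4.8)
The plan is to deduce the Corollary from the Theorem by a soft counting argument based on the classical description of $\Irr(\mathfrak A_n)$. Recall that $\chi^{\lambda}\otimes\mathrm{sgn}=\chi^{\lambda'}$ for every partition $\lambda\vdash n$, where $\lambda'$ is the conjugate partition, and that, by Clifford theory, the restriction $\chi^{\lambda}|_{\mathfrak A_n}$ is irreducible whenever $\lambda\neq\lambda'$, with $\chi^{\lambda}|_{\mathfrak A_n}=\chi^{\lambda'}|_{\mathfrak A_n}$, while for a self-conjugate $\lambda$ it splits into two distinct irreducibles of equal degree. Hence, writing $s(n)$ for the number of self-conjugate partitions of $n$, the set $\Irr(\mathfrak A_n)$ consists of $\tfrac12(p(n)-s(n))$ \emph{non-split} characters, naturally indexed by the unordered pairs $\{\lambda,\lambda'\}$ with $\lambda\neq\lambda'$, together with $2s(n)$ \emph{split} ones; in particular $|\Irr(\mathfrak A_n)|=\tfrac12(p(n)+3s(n))$.

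The first input I would record is that $s(n)=o(p(n))$; this is classical, since self-conjugate partitions of $n$ are equinumerous with partitions of $n$ into distinct odd parts, so $s(n)\le q(n)$, the number of partitions of $n$ into distinct parts, and $q(n)=o(p(n))$. Consequently the non-split characters make up a fraction $\tfrac{p(n)-s(n)}{p(n)+3s(n)}\to1$ of $\Irr(\mathfrak A_n)$, while the non-self-conjugate partitions make up a fraction $\tfrac{p(n)-s(n)}{p(n)}\to1$ of all partitions of $n$.

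Next I would transfer the Theorem to a conditional statement. Take $\lambda$ uniform among partitions of $n$ and $g$ uniform in $\mathfrak S_n$, independently, so that $\Pr[\chi^{\lambda}(g)\neq0]=1-P_n\to0$. The events $\{\lambda\neq\lambda'\}$ and $\{g\in\mathfrak A_n\}$ involve independent sources of randomness, of probabilities $\tfrac{p(n)-s(n)}{p(n)}$ and $\tfrac12$, so conditioning on their intersection gives
\[
q_n\ :=\ \Pr\bigl[\,\chi^{\lambda}(g)\neq0 \bigm| \lambda\neq\lambda',\ g\in\mathfrak A_n\,\bigr]\ \le\ \frac{1-P_n}{\tfrac12\cdot\tfrac{p(n)-s(n)}{p(n)}}\ \longrightarrow\ 0 .
\]
Now choose $\chi$ uniformly from $\Irr(\mathfrak A_n)$ and $g$ uniformly from $\mathfrak A_n$. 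For a non-split $\chi=\chi^{\lambda}|_{\mathfrak A_n}$ and any $g\in\mathfrak A_n$ we have $\chi(g)=\chi^{\lambda}(g)=\chi^{\lambda'}(g)$, the last equality because $\mathrm{sgn}(g)=1$; since the two-to-one map $\lambda\mapsto\{\lambda,\lambda'\}$ carries the uniform measure on non-self-conjugate partitions to the uniform measure on the indexing pairs, and $[\chi^{\lambda}(g)=0]$ is constant on each such pair, the conditional probability that $\chi(g)=0$ given that $\chi$ is non-split equals $1-q_n$. Discarding the non-negative contribution of the split characters, the probability that $\chi(g)=0$ therefore satisfies
\[
\Pr[\chi(g)=0]\ \ge\ \frac{p(n)-s(n)}{p(n)+3s(n)}\,(1-q_n)\ \longrightarrow\ 1 ,
\]
and, being a probability, it tends to $1$ --- which is the Corollary.

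I do not expect a genuine obstacle: the alternating-group statement is meant to be a formal consequence of the symmetric-group one. The two points calling for care are the probabilistic bookkeeping in the conditioning step --- one must exploit the independence of $\{\lambda\neq\lambda'\}$ and $\{g\in\mathfrak A_n\}$ so that the conditional failure probability $q_n$ is dominated by the unconditional $1-P_n$ --- and the (classical) negligibility of self-conjugate partitions among all partitions of $n$; the rest is immediate from Clifford theory.
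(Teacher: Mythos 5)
Your proof is correct and follows essentially the same route as the paper's: both deduce the result from the Theorem via the standard branching to $\mathfrak A_n$ (restrictions $\chi^\lambda|_{\mathfrak A_n}$ with $\lambda$ not self-conjugate account for all but $o(1)$ of $\Irr(\mathfrak A_n)$, the self-conjugate partitions being negligible) together with the observation that $\mathfrak A_n$ is half of $\mathfrak S_n$, so conditioning at worst doubles the failure probability $1-P_n$. Your version merely makes explicit the counting ($|\Irr(\mathfrak A_n)|=\tfrac12(p(n)+3s(n))$, $s(n)=o(p(n))$) and the conditional-probability bookkeeping that the paper leaves implicit.
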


We prove these results in Section~\ref{Proof:Section} 
and make some remarks in Section~\ref{Remarks:Section}.

\section{Proofs}\label{Proof:Section}
Theorem will follow from the next lemma and two classical 
results about random partitions and random permutations.  
Recall that a 
partition $\lambda$ of $n$ is a set of positive integers 
$\lambda_1\geq \lambda_2\geq\ldots\geq \lambda_\ell$ 
summing to $n$.  Let $p_n$ denote the total number of 
partitions of $n$.  The cycle sizes of an element 
$g\in\mathfrak S_n$ determine a partition $\lambda$ which in 
turn determines the conjugacy class $K_\lambda$.  In particular, 
the number of conjugacy classes (resp. irreducible characters) 
of the symmetric group is equal to $p_n$.

\begin{lemma*}
Let $\Omega_n$ be a subset of the partitions of $n$.  Then
%--------------
\begin{equation}
1\geq P_n\geq Q_n-|\Omega_n|/p_n,\label{Eq:Lemma}
\end{equation}
%--------------
where $Q_n$ is the probability that the partition of 
an element $g\in \mathfrak S_n$ is in $\Omega_n$.
\end{lemma*}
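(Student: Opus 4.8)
The plan is to condition on the conjugacy class of $g$ and then exploit the column orthogonality relations together with the fact that the character table of $\mathfrak S_n$ is integral. Write $K_\mu$ for the conjugacy class of cycle type $\mu$, write $z_\mu=|C_{\mathfrak S_n}(g)|=n!/|K_\mu|$ for the centralizer order, and write $\chi^\lambda$ for the irreducible character labeled by $\lambda$. Conditioning on the class of $g$ gives
\[
P_n=\sum_{\mu}\frac{|K_\mu|}{n!}\cdot\frac{N_\mu}{p_n},
\]
where the sum runs over all partitions $\mu$ of $n$ and $N_\mu$ is the number of partitions $\lambda$ with $\chi^\lambda(\mu)=0$. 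Since $P_n$ is a probability we have $P_n\le 1$, which is the left inequality in \eqref{Eq:Lemma} for free.

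For the right inequality I would first drop every term with $\mu\notin\Omega_n$; these are nonnegative, so $P_n\ge\sum_{\mu\in\Omega_n}\frac{|K_\mu|}{n!}\cdot\frac{N_\mu}{p_n}$. Setting $M_\mu=p_n-N_\mu$, the number of $\lambda$ with $\chi^\lambda(\mu)\ne0$, the inner fraction equals $1-M_\mu/p_n$, and since $Q_n=\sum_{\mu\in\Omega_n}|K_\mu|/n!$ this yields
\[
P_n\ge\sum_{\mu\in\Omega_n}\frac{|K_\mu|}{n!}\Bigl(1-\frac{M_\mu}{p_n}\Bigr)=Q_n-\frac1{p_n}\sum_{\mu\in\Omega_n}\frac{|K_\mu|}{n!}\,M_\mu.
\]

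The crux is to bound $M_\mu$, and this is where column orthogonality enters: $\sum_{\lambda}\chi^\lambda(\mu)^2=z_\mu$. Because each $\chi^\lambda(\mu)$ is an integer, every nonzero summand is at least $1$, so $M_\mu\le z_\mu=n!/|K_\mu|$, i.e.\ $\frac{|K_\mu|}{n!}M_\mu\le1$ for every $\mu$. Summing this over $\mu\in\Omega_n$ gives $\sum_{\mu\in\Omega_n}\frac{|K_\mu|}{n!}M_\mu\le|\Omega_n|$, and plugging this into the previous display produces $P_n\ge Q_n-|\Omega_n|/p_n$. The only input that is not pure bookkeeping with conditional probabilities is the integrality of the character values of $\mathfrak S_n$, which is what turns the orthogonality identity $\sum_\lambda\chi^\lambda(\mu)^2=z_\mu$ into the count $M_\mu\le z_\mu$; I expect that to be the one genuinely load-bearing step.
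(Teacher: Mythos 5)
Your proof is correct and follows essentially the same route as the paper's: both hinge on the column orthogonality relation $\sum_\lambda \chi^\lambda(\mu)^2 = z_\mu$, which (via integrality of the character values, a point you make explicit and the paper leaves implicit) bounds the number of characters not vanishing on $K_\mu$ by $z_\mu$, so that each class in $\Omega_n$ contributes at most $1/p_n$ of loss. The only cosmetic difference is that you first write $P_n$ as an exact sum by conditioning on the class and then discard the terms outside $\Omega_n$, whereas the paper lower-bounds $P_n$ directly by counting vanishing characters on each class in $\Omega_n$.
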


\begin{proof}[Proof of Lemma]
Consider an element $g\in\mathfrak S_n$ whose cycle sizes 
form a partition $\lambda$ which belongs to $\Omega_n$.  Write
%--------------
\[\sum\nolimits_\chi |\chi(g)|^2=z_\lambda,\]
%--------------
where the sum ranges over all irreducible characters,
so that $z_\lambda$ is the size of the centralizer of $g$ 
by one of the usual orthogonality relations for characters.  
It follows that there are at least $p_n-z_\lambda$ irreducible 
characters which vanish at $g$, and thus at every conjugate 
of $g$.  Whence 
%--------------
\[
P_n\geq
\frac{1}{p_nn!}\sum\nolimits_{\lambda\in\Omega_n} (p_n-z_\lambda)|K_\lambda|,
\]
%--------------
where $K_\lambda$ is the class indexed by $\lambda$, so 
that $|K_\lambda|/n!=z_\lambda^{-1}$, 
and hence
%--------------
\[
P_n\geq 
\sum\nolimits_{\lambda\in\Omega_n}z_\lambda^{-1}-|\Omega_n|/p_n.
\]
%--------------
Now rewrite $z_\lambda^{-1}$ as $|K_\lambda|/n!$ 
to see that the sum is $Q_n$.
\end{proof}

We use equation~\eqref{Eq:Lemma} in tandem with two old results 
to show that $P_n$ tends to one by 
demonstrating a sequence of sets $\Omega_1,\Omega_2,\ldots$ such that 
$Q_n$ tends to one and $|\Omega_n|/p_n$ tends to zero.
We use a classical 
result of Erd\H{o}s and Lehner~\cite{EL} which tells us that, if 
$f(n)$ is any function which tends to infinity, then 
at most $o(p_n)$ 
(as $n\to \infty$) partitions of $n$ have a largest part $\lambda_1$ 
such that
%--------------
\begin{equation}
\lambda_1\geq C\sqrt{n}(\log n+f(n)),\label{Erdos:Lehner}
\end{equation}
%--------------
where $C$ is some explicit positive constant.  
We also use the following result of Goncharov~\cite{G} 
about the number of cycles $m$ of an element of $\mathfrak S_n$:
\[
{\rm{Prob.}}\left\{\alpha<\frac{m-\log n}{\sqrt{2\log n}}<\beta\right\}
\to
\pi^{-\frac{1}{2}}\! \int_\alpha^{\,\beta}\! e^{-t^2}dt,
\quad\quad n\to\infty.
\]

\begin{proof}[Proof of Theorem]
Let $\Omega_n$ be the set of partitions of $n$ which 
satisfy~\eqref{Erdos:Lehner} with $f(n)=\log n$, so that 
$|\Omega_n|/p_n$ tends to zero as $n$ tends to infinity.

To see that $Q_n$ tends to one, note that Goncharov's result 
tells us that all but at most $o(n!)$ elements of $\mathfrak S_n$ have 
$\log n+o(\log n)$ cycles, and so 
all but at most $o(n!)$ must have a cycle of size at least 
$n/(2\log n)$, which of course grows larger than $2C\sqrt{n}\log n$ as $n\to\infty$.
\end{proof}

Corollary now follows from the usual construction of the 
irreducible characters of $\mathfrak A_n$ by restricting 
down from $\mathfrak S_n$.

\begin{proof}[Proof of Corollary]
All but at most $o(|\Irr(\mathfrak A_n)|)$  
irreducible characters of $\mathfrak A_n$ 
have exactly two irreducible extensions to $\mathfrak S_n$, and 
these extensions account for all but $o(|\Irr(\mathfrak S_n)|)$ 
of the irreducible characters of $\mathfrak S_n$; indeed, it is 
well-known that two irreducible characters $\chi^\lambda,\chi^{\lambda'}$ 
of $\mathfrak S_n$ restrict to the same character if and only if 
$\lambda,\lambda'$ are conjugate, and moreover, the restriction of 
$\chi^\lambda$ is irreducible if $\lambda$ is not self-conjugate, 
and it is the sum of two distinct irreducible characters otherwise.
The result now follows from Theorem, noting that $\mathfrak A_n$ 
contains half of $\mathfrak S_n$.
\end{proof}

\section{Remarks}\label{Remarks:Section}
\subsection{}\hspace{-\parindent}%
\hspace{1.2ex}For a finite group $G$, write $P(G)$ for the probability that $\chi(g)=0$
when $\chi$ is chosen at random from the irreducible characters and $g$ 
is chosen at random from the group, so that our results translate as 
$P(\mathfrak S_n),P(\mathfrak A_n)\to 1$.  
Empirical evidence suggests that many other groups have a high proportion 
of character values equal to zero as well, and one might conjecture that 
the following question has a positive answer, perhaps even for all finite 
groups.

\begin{question}
Let $G$ be chosen uniformly at random from the set of finite simple groups 
of size less than $n$.  Then is it true that for every $\epsilon>0$ one has 
that $P(G)>1-\epsilon$ with probability $\to 1$ as $n\to\infty$?
\end{question}  

It would be interesting to show that 
$P(G)>\epsilon$ with probability $\to 1$ as $n\to\infty$ even for small 
$\epsilon$.  To this end, note that Lemma of \S2 used only general facts 
about finite groups.  The proof shows that the following is true.

\begin{proposition*}
Let $\Omega$ be a set of classes of a finite group $G$.  Then
%--------------
\begin{equation}
1\geq P(G)\geq Q(G,\Omega)-R(G,\Omega),\label{PQ:Eq}
\end{equation}
%--------------
where $Q(G,\Omega)$ is the proportion of $G$ covered by $\Omega$, and 
$R(G,\Omega)$ is the proportion of classes which belong to $\Omega$.  
Moreover, the right side of~\eqref{PQ:Eq} is largest when $\Omega$ is 
the set of larger than average classes.\hfill\qed
\end{proposition*}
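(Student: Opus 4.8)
The plan is to re-run the double-counting from the proof of the Lemma, now over an arbitrary finite group $G$, to obtain~\eqref{PQ:Eq}, and then to extract the optimality statement by inspecting the resulting lower bound one conjugacy class at a time. Write $N=|G|$ and let $k$ be the number of irreducible characters of $G$, which equals the number of conjugacy classes; for a class $K$ and $g\in K$ put $z_K=|C_G(g)|=N/|K|$. As in the Lemma, everything hinges on one pointwise fact: at least $k-z_K$ irreducible characters of $G$ vanish on $K$.

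To prove it, column orthogonality again gives $\sum_\chi|\chi(g)|^2=z_K$ for $g\in K$, so it is enough to bound by $z_K$ the number $m_K$ of $\chi\in\Irr(G)$ with $\chi(g)\neq 0$. For $\mathfrak S_n$ this was free, since character values there are rational integers and a nonzero one has absolute value at least $1$; in general I would instead note that $\prod_{\chi(g)\neq 0}\chi(g)$ is a nonzero rational integer. It is a nonzero algebraic integer because each factor is, and it is rational because any Galois automorphism permutes $\Irr(G)$, hence permutes as a multiset the entries in the column of the character table at $g$ (sending zeros to zeros), and therefore fixes the product of the nonzero ones. Consequently that product has absolute value $\geq 1$, whereas the arithmetic--geometric mean inequality applied to the $m_K$ numbers $|\chi(g)|^2$ gives $\prod_{\chi(g)\neq 0}|\chi(g)|^2\leq (z_K/m_K)^{m_K}$, which is $<1$ as soon as $m_K>z_K$; hence $m_K\leq z_K$. (One could instead simply invoke the known fact that each class carries at most $|C_G(g)|$ non-vanishing irreducible characters.)

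Given the pointwise bound, what remains is the bookkeeping of the Lemma. Since $\#\{\chi:\chi(g)=0\}$ is nonnegative for every $g$ and is at least $k-z_K$ for $g\in K$,
\[
P(G)=\frac{1}{kN}\sum_{g\in G}\#\{\chi:\chi(g)=0\}\geq\frac{1}{kN}\sum_{K\in\Omega}|K|\,(k-z_K).
\]
Using $|K|\,z_K=N$, the right-hand side equals $\frac{1}{kN}\bigl(k\sum_{K\in\Omega}|K|-N|\Omega|\bigr)$; since $\sum_{K\in\Omega}|K|=N\,Q(G,\Omega)$ and there are $k$ classes altogether, this is $Q(G,\Omega)-|\Omega|/k=Q(G,\Omega)-R(G,\Omega)$, while $P(G)\leq 1$ is automatic. (Classes with $k-z_K<0$, e.g.\ the identity class, may be left in the sum, since the true count is nonnegative in any case.) For the optimality claim, rewrite the lower bound as $Q(G,\Omega)-R(G,\Omega)=\sum_{K\in\Omega}\bigl(|K|/N-1/k\bigr)$: a class in $\Omega$ contributes $|K|/N-1/k$ and a class outside contributes $0$, and the former is positive exactly when $|K|>N/k$, i.e.\ when $K$ is larger than the average class (whose size is $N/k$, since the $k$ classes partition $G$); so the sum is largest precisely when $\Omega$ is the set of above-average classes, up to the harmless choice of whether to include any class of exactly average size.

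The one step with real content is the pointwise bound, and within it the sole thing beyond the symmetric-group case is the inequality $m_K\leq z_K$ for a general group; I expect that to be the main obstacle. After it, the argument is the counting already carried out for the Lemma, followed by a one-line optimisation.
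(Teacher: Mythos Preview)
Your argument is correct and follows exactly the route the paper indicates: the paper offers no separate proof beyond a \qed and the remark that the Lemma ``used only general facts about finite groups,'' and you faithfully re-run that double count for a general $G$, then obtain the optimality claim by writing $Q-R=\sum_{K\in\Omega}(|K|/N-1/k)$, which is just an unpacking of the paper's observation that ``larger than average'' means $|{\rm Cl}(G)|\geq|C_G(g)|$. The one place you go beyond the paper is the step ``at most $z_K$ irreducible characters are nonzero on $K$'': you rightly flag that this is not automatic once character values cease to be rational integers, and your Galois-plus-AM--GM argument (the product of the nonzero column entries is a nonzero rational integer, hence of modulus $\geq 1$) is a standard and correct way to supply what the paper leaves implicit.
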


For the last claim of the proposition, note that another way to say 
that an element belongs to a larger than average class is to say that 
$|{\rm Cl}(G)|\geq |C_G(g)|$, so that in the case of the symmetric 
group one would write that $p_n\geq z_\lambda$.

\subsection{}\hspace{-\parindent}%
\hspace{1.2ex}We also ask about choosing $\chi(g)$ at random from the character table. 

\begin{question}\label{CharTable:Question}
Let $\chi$ be chosen at random from the irreducible characters of 
$\mathfrak S_n$, and $K$ be chosen at random from the classes of 
$\mathfrak S_n$.  What can be said about the probability that 
$\chi(g_K)=0$ as $n\to \infty$?  (Here $g_K\in K$ is arbitrary.)
\end{question}

One might conjecture that the probability converges to $1/e$, or perhaps 
even $1/3$.  It would also be interesting then to investigate similar 
asymptotic questions about the nonzero entries.  For example, we ask the 
following.

\begin{question}
Does the ratio of positive to negative entries of the character table of 
$\mathfrak S_n$ tend to one as $n$ tends to infinity?
\end{question}

%%%%%%%%%%%%%%%%%%%%%%%%%%%%%%%%%%%%%%%%%%%%%%%%%%%%%%%%%%%%%%%%%%


\begin{thebibliography}{2}
\bibitem{EL}  
P. Erd\H{o}s and J. Lehner, 
The distribution of the number of summands in the partitions of a 
positive integer.  
\emph{Duke Math. J.} {\bf 8} (1941) 335--345.

\bibitem{G}
V. L. Goncharov,
Sur la distribution des cycles dans les permutations. 
\emph{C. R. (Doklady) Acad. Sci. URSS (N.S.)} {\bf 35} (1942) 267--269.
\end{thebibliography}
\end{document}